\definecolor{darkred}{rgb}{0.8,0,0} %can change the intensity in [0,1]
\definecolor{darkgreen}{rgb}{0,0.6,0}
\definecolor{darkblue}{rgb}{0,0,0.8}
\newtheorem{thm}{Theorem}[section]
\newtheorem{prop}[thm]{Proposition}
\theoremstyle{definition}
\newtheorem{definition}[thm]{Definition}
\theoremstyle{remark}
\newtheorem{rmk}[thm]{Remark}
\newtheorem{example}[thm]{Example}
\newcommand{\eps}{\epsilon}
\newcommand{\R}{\mathbb{R}}
\newcommand{\Z}{\mathbb{Z}}
\def    \CF     {\operatorname{CF}}
\def    \HF     {\operatorname{HF}}
\def    \SH     {\operatorname{SH}}
\newcommand{\CZ}{\text{CZ}}
\newcommand{\SDMax}{symplectically degenerate maximum}
\newcommand{\SDMaxs}{symplectically degenerate maxima}
\newcommand{\SDMin}{symplectically degenerate minimum}
\newcommand{\SDMins}{symplectically degenerate minima}
\numberwithin{equation}{section}
\begin{document}

%Titel und Formangaben (Datum und €hnliches)
\title{The Conley conjecture for the cotangent bundle}
\author{Doris Hein}
\address{Department of Mathematics, UC Santa Cruz,
Santa Cruz, CA 95064, USA}
\email{dhein@ucsc.edu}

\date{\today}

\begin{abstract}
We prove the Conley conjecture for cotangent bundles of oriented, closed manifolds, and Hamiltonians, which are quadratic at infinity, i.e., we show that such Hamiltonians have infinitely many periodic orbits. 
For the conservative systems, similar results  have been proven by Lu and Mazzucchelli using convex Hamiltonians and Lagrangian methods.
Our proof uses Floer homological methods from Ginzburg's proof of the Conley conjecture for closed symplectically aspherical manifolds.
\end{abstract}
\maketitle

\tableofcontents
%\newpage
%Einleitung
\section{Introduction}
\label{sec:intro}

\subsection{Main results}
\label{sec:Conley}

In the present paper, we establish a version of the Hamiltonian Conley conjecture for cotangent bundles.
Let $B$ be a closed, oriented manifold and let $M=T^\ast B$ be the cotangent bundle of $B$ with the canonical symplectic structure $\omega$. 
For cotangent bundles, a natural class of Hamiltonians to work with, from both technical and conceptual point of view, is that of Hamiltonians quadratic at infinity; cf. \cite{AS}.
More concretely, let $H\colon S^1\times M\to \R$ be a Hamiltonian on $M$, satisfying the following conditions:

\begin{enumerate}
\item [\textbf{(H1)}] there exist constants $h_0>0$ and $h_1\geq 0$, such that 
\[dH(t,q,p)\left(p\frac{\partial}{\partial p}\right)-H(t,q,p)\geq h_0\|p\|^2-h_1\text{ and }\]
\item [\textbf{(H2)}] there exists a constant $h_2\geq 0$, such that 
\[\|\nabla_qH(t,q,p)\|\leq h_2(1+\|p\|^2) \text{ and } \|\nabla_pH(t,q,p)\|\leq h_2(1+\|p\|).\]
\end{enumerate}

In this setting, the main result of this paper is the following theorem:

\begin{thm}[The Conley Conjecture]
Let $M$ and $H$ be as above and let $\varphi$ be the time-1-map of the Hamiltonian flow of $H$. Assume that $\varphi$ has only finitely many fixed points.
Then $\varphi$ has simple periodic orbits of arbitrarily large period.
\label{ConlConj}
\end{thm}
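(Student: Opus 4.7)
The plan is to adapt Ginzburg's Floer-homological proof of the Conley conjecture from closed symplectically aspherical manifolds to the noncompact cotangent setting $M = T^\ast B$, working throughout with filtered Floer homology of Hamiltonians quadratic at infinity. Because $\omega = d\lambda$ is exact, bubbling of pseudoholomorphic spheres cannot occur and the symplectic action has no recapping ambiguity, so the algebraic skeleton of Ginzburg's argument is available from the outset; the new difficulties are primarily analytic, arising from the noncompactness of $M$.

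The first step is to set up the filtered Floer homology $\HF^{(a,b)}(H)$ and its iterated analogues for $H^{\natural k}$, following the Abbondandolo--Schwarz framework. Condition (H1) plays the role of an action-coercivity estimate that prevents Floer cylinders from drifting to infinity along the fibers, while (H2) provides the quadratic-at-infinity control on $X_H$ needed for a priori $C^0$-bounds on the $p$-coordinate of trajectories via a maximum-principle-type argument. Once these estimates are in place, monotone continuation maps exist within the admissible class, and the total symplectic homology $\SH_\ast(T^\ast B)$ is identified with $H_\ast(\Lambda B)$; this infinite-dimensional loop space homology, organized by action, plays the role that singular homology of a closed manifold does in the original Ginzburg setup.

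The second step is to invoke the Ginzburg dichotomy. One defines local Floer homology $\HF^{\mathrm{loc}}(x^k)$ for every isolated $k$-periodic orbit $x^k$ and splits cases according to whether some fixed point of $\varphi$ is a \emph{homological \SDMax{}}, i.e.\ has nontrivial local Floer homology concentrated at the top admissible degree under every admissible iteration. If no homological \SDMax{} exists, then the local contributions in any fixed degree remain small while the action-filtered $\SH_\ast$ forces new generators at unbounded action levels; by the assumed finiteness of fixed points, these new generators must descend to simple periodic orbits of growing period. If a homological \SDMax{} fixed point $x$ does exist, then all its iterates $x^k$ are again homological \SDMaxs{} with action $k \cdot \mathcal{A}_H(x)$, and a $C^2$-small compactly supported perturbation of $H^{\natural k}$ localized near $x^k$ produces one-periodic orbits of the perturbation in an arbitrarily narrow action window; a pigeonhole argument over the finite action spectrum of the iterates of the supposedly finite set of fixed points then yields a genuinely simple periodic orbit of period $k$ for every sufficiently large $k$.

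The main obstacle I foresee is analytic: Ginzburg's combinatorial machinery requires continuation invariance, a pair-of-pants product, and local Floer homology to behave uniformly for the iterated Hamiltonians $H^{\natural k}$, whose effective quadratic-at-infinity constants naturally degrade with $k$. The heart of the work is therefore to extract from (H1)--(H2) trapping estimates for Floer moduli spaces of $H^{\natural k}$ and its perturbations that are controlled well enough in $k$ to run the filtered-homology arguments, and to verify that the compactly supported perturbations used in the \SDMax{} step preserve admissibility. Once these foundational estimates are secured, the homological portion of Ginzburg's template transfers with only cosmetic modifications.
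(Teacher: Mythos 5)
Your high-level template (adapt Ginzburg's aspherical argument, establish $C^0$-bounds from (H1)--(H2), identify $\HF_\ast$ with loop-space homology, run a dichotomy on symplectically degenerate extrema, and finish by squeezing) matches the paper's architecture. However, there is one genuinely wrong step in the dichotomy that the paper goes out of its way to repair, and your proposal falls squarely into the trap.

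You propose to split cases according to whether some fixed point is a homological \SDMax{}, i.e.\ has local Floer homology in the top degree $n$, and to argue in the non-\SDMax{} case that iterations push the support of local Floer homology out of the window $[-2n,0]$, contradicting a nonvanishing of $\HF_n$. For a closed symplectically aspherical $M$ this works because $\HF_n(H)\cong H_{2n}(M)\neq 0$. For $T^\ast B$ the isomorphism goes with a sign change in degree, $\HF_\ast(H)\cong H_{-\ast}(\Lambda_0 B)$, so $\HF_n(H)\cong H_{-n}(\Lambda_0 B)=0$: there is nothing in degree $n$ to contradict, and the existence argument for a \SDMax{} cannot even get started. The paper explicitly flags this obstruction and instead introduces the notion of a \SDMin{}: an isolated orbit $x$ with $\Delta_H(x)=0$ and $\HF_{-n}(H,x)\neq 0$. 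The nonvanishing $\HF_{-n}(H)\cong H_n(\Lambda_0 B)\neq 0$ then forces a \SDMin{} to exist whenever the mean-index argument fails. Geometrically, a \SDMin{} is a strict local \emph{minimum} with arbitrarily small Hessian in suitable coordinates; this is established by passing to the inverse Hamiltonian $H^{\mathrm{inv}}_t=-H_t\circ\varphi_H^t$, whose \SDMax{} corresponds to the original \SDMin{}. The squeezing Hamiltonians $H_\pm$ must then be built around a bump-down (local minimum) rather than a bump-up, and the action window shifts from $(kc+\delta_k,kc+\eps)$ at degree $n+1$ to $(kc-\eps,kc-\delta_k)$ at degree $-n-1$. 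These changes are not cosmetic: without them your step ``if a homological \SDMax{} exists\dots'' is vacuously idle (no \SDMax{} need exist), while the step ``if no \SDMax{} exists\dots'' fails to derive a contradiction because the relevant global Floer group in degree $n$ vanishes. Everything else in your outline (analytic trapping estimates from (H1)--(H2), uniform control in $k$, the localized perturbation and pigeonhole over actions) is in the right spirit and matches the paper's use of the Abbondandolo--Schwarz bounds and the direct-sum decomposition of filtered Floer homology.
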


This is the cotangent bundle version of a conjecture Conley stated in 1984 in \cite{Co} for the case that  $M=T^{2n}$ is a torus.
It has been proven for weakly non-degenerate Hamiltonian diffeomorphisms of tori in \cite{CZ} and of symplectically aspherical manifolds in \cite{SZ}. In \cite{FrHa}, the conjecture was proven for all Hamiltonian diffeomorphisms of surfaces other than $S^2$. In its original form, as stated 
in \cite{Co} for all Hamiltonian diffeomorphisms of tori, the conjecture was established in \cite{Hi} and the case of an arbitrary closed, symplectically aspherical manifold was settled in \cite{Gi}. This proof was extended to closed symplectically rational manifolds $M$ with $c_1(M)|_{\pi_2(M)}=0$ in~\cite{GG-gaps} and to general closed symplectic manifolds with  $c_1(M)|_{\pi_2(M)}=0$ in~\cite{He}.

The papers of Hingston and Ginzburg provided the tool of symplectically degenerate extrema also utilized here for dealing with the degenerate case of the Conley conjecture for compact manifolds; see \cite{Gi,GG-gaps,He,Hi}. Our argument also uses recent results on the Floer homology of Hamiltonians on cotangent bundles proven e.g. in~\cite{AS,Se,Vi}.

The Lagrangian version of the conjecture has been considered, e.g., in \cite{Lo-CC,Lu,Ma}. The similarity between the problems can also easily be seen on the level of the proofs, although the methods utilized in \cite{Lo-CC,Lu,Ma} are quite different from the Floer homological techniques used in the present paper and in the proofs for the case of closed manifolds.
The class of Hamiltonians quadratic at infinity includes the Hamiltonians of classical mechanics (see Example \ref{classical ex}) and convex quadratic Hamiltonians used in \cite{Lo-CC,Lu}, but does not include all Tonelli Hamiltonians. 
Namely, Tonelli Hamiltonians are assumed to satisfy a convexity condition and to have superlinear (but not necessarily quadratic) growth, while Hamiltonians quadratic at infinity only need to have quadratic growth outside a compact set.

\begin{rmk}
The choice of a class of Hamiltonians in these questions is important from at least a technical points of view. The conditions on the Hamiltonian assumed here are used in the proof of Theorem~\ref{ConlConj} only to ensure that the Floer homology is well-defined, independent of the choice of the Hamiltonian $H$ and that $\HF_{-n}(H)\neq 0$. 
Under the conditions (H1) and (H2), these properties have been proven in \cite{AS}; see also \cite{Se,SW,Vi}. 
One can expect a result similar to Theorem~\ref{ConlConj} to hold for other classes of Hamiltonians. For example, our argument also goes through for  Hamiltonians which, at infinity, are autonoumous and fiberwise convex and have superlinear growth; see Remark \ref{rmk:Liouville}.
For the classes of convex and quadratic Hamiltonians or Tonelli Hamiltonians, the theorem was proven by means of Lagrangian methods in \cite{Lo-CC,Lu,Ma}.
It is not clear if the Floer homology is defined for Tonelli Hamiltonians, since the Hamiltonian flow is not automatically complete.
Note, however, that the Floer homology is defined if the Hamiltonian is autonomous and Tonelli at infinity. But even then, the invariance of filtered Floer homology has not been established, cf.~\cite{Se,Vi}.
\end{rmk}

\begin{rmk}
\label{rmk:Liouville}
Our proof of Theorem \ref{ConlConj} can also be used in a more general setting. Namely, let $(M,\omega)$ be a Liouville domain and let $H$ be a Hamiltonian, which is autonomous and depends only on the "radial variable" at infinity and has superlinear growth. The Floer homology $\HF(H)$ of such a Hamiltonian is isomorphic to the symplectic homology $\SH(M)$; see \cite{Se}. This homology is a unital algebra with unit in degree $-n$ in our degree conventions. (Strictly speaking, the cohomology $(\HF(H))^\ast$ is such an algebra.) Hence, $\HF_{-n}(H)=\SH_{-n}(M)\neq 0$ if and only if $\SH(M)\neq 0$.
In this case, our proof of Theorem~\ref{ConlConj} goes through  and thus the Conley conjecture holds.
Strictly speaking, this is not a generalization of Theorem \ref{ConlConj} since on cotangent bundles most Hamiltonians which are quadratic at infinity are not in the above class.
\end{rmk}

By a similar argument as in \cite{GG-gaps, He} for compact manifolds, it suffices to prove Theorem \ref{ConlConj} in the presence of a \SDMin.

\begin{definition}
\label{SDMin}
An isolated $k$-periodic orbit $x$ of a $k$-periodic Hamiltonian $H$ is called a
 \textit{\SDMin} of $H$ if \[\Delta_H(x)=0 \text{ and } \HF_{-n}(H,x)\neq 0.\]
\end{definition}

In this definition, let $\Delta_H(x)$ be the mean index of $x$ and we denote by $\HF_\ast(H,x)$ the local Floer homology of $H$ at $x$.
We refer to~\cite{GG-loc} for details on the local Floer homology and to \cite{SZ} for the definition of the mean index. See also Section~\ref{sec:OrbitsandHF} for more details on the mean index and Section \ref{sec:locHF} for the definition of the local Floer homology.

Definition~\ref{SDMin} is analogous to the notion of a \SDMax\ used in \cite{Gi,GG-gaps,He,Hi}. 
The latter was first utilized in \cite{Hi} when the concept of \SDMaxs\ was introduced. It was explicitly formulated and further investigated in \cite{Gi,GG-loc}.
As we will show, \SDMins\ and \SDMaxs\ have very similar geometric properties; see Proposition~\ref{SDM geo} below.

\begin{thm}[Degenerate Conley Conjecture]
Let $M=T^\ast B$ be the cotangent bundle of a closed, oriented manifold $B$ 
 and let $H$ be a Hamiltonian on $M$ satisfying conditions (H1) and (H2). Assume furthermore that $\varphi_H$ has only finitely many fixed points, and  that $H$ has a \SDMin.
Then the Hamiltonian diffeomorphism $\varphi_H$ generated by $H$ has simple periodic orbits of arbitrarily large period.
\label{degConlConj}
\end{thm}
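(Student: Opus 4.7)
The plan is to adapt the Floer-homological strategy of Ginzburg~\cite{Gi} and its refinements~\cite{GG-gaps,He} to the cotangent bundle setting, where the essential new input is the non-vanishing $\HF_{-n}(H)\neq 0$ established in~\cite{AS} under (H1)--(H2). I argue by contradiction: assume that $\varphi_H$ has only finitely many periodic orbits, all of period at most some $L$, and aim to produce a simple $k$-periodic orbit for each sufficiently large prime $k>L$. The first technical step is the persistence of the \SDMin\ structure under iteration: the $k$-th iterate $x^{(k)}$ of the given \SDMin\ $x$ must satisfy $\Delta_{H^{\# k}}(x^{(k)})=0$ and $\HF_{-n}(H^{\# k},x^{(k)})\neq 0$ for every $k\geq 1$. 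This is the exact analogue for minima of the corresponding property of \SDMaxs\ from~\cite{Gi,GG-loc}, should be contained in Proposition~\ref{SDM geo}, and is entirely local in nature, so it transfers to the non-compact setting without change.

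The core construction is a bump-function perturbation in the spirit of~\cite{Gi,Hi}. I would modify $H^{\# k}$ in an arbitrarily small tubular neighborhood of $x^{(k)}$ to obtain a Hamiltonian $\tilde H$ which (i) still satisfies (H1)--(H2), so that the invariance results of~\cite{AS,Vi} give $\HF_\ast(\tilde H)\cong \HF_\ast(H^{\# k})$ with a non-zero fundamental class in degree $-n$; (ii) shifts the action at $x^{(k)}$ by a small, positive, prescribed amount $\delta>0$ that can be chosen independently of $k$; and (iii) preserves a non-zero local Floer class in degree $-n$ carried by a perturbed orbit near $x^{(k)}$. Pairing this local class with the non-zero global class of $\HF_{-n}(\tilde H)$ through filtered Floer homology then produces an orbit $y$ of $\tilde H$ whose action lies in the narrow interval $\bigl(kA_H(x),\,kA_H(x)+\delta\bigr)$, yet is distinct from $x^{(k)}$ itself.

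The final step is a pigeonhole argument on the action spectrum. Under the finiteness hypothesis, every periodic orbit of $\varphi_H$ has period dividing some fixed $j\leq L$, and for a prime $k>L$ the 1-periodic orbits of $H^{\# k}$ coming from iterates of $\varphi_H$ have actions of the form $kA_H(z)$ for $z$ ranging over the finitely many 1-periodic orbits of $H$. The minimum spacing between these values equals $k\cdot d$, where $d>0$ is the minimum action gap of $H$ among the existing orbits. Choosing $\delta<d$ at the outset, the orbit $y$ produced above cannot be an iterate of any existing orbit, and primality of $k$ then forces $y$ to be a simple $k$-periodic orbit, contradicting the finiteness assumption. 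The main obstacle I expect is in step (iii) and in carrying out the bump perturbation in the non-compact setting: one must keep $\tilde H$ compatible with the quadratic-at-infinity conditions and reconcile the spectral-invariant estimates with the $C^0$-bounds for Floer trajectories from~\cite{AS,Vi}, so that the filtered Floer homology behaves as in the closed aspherical case of~\cite{Gi}. The rest of the Floer-theoretic bookkeeping should then go through in parallel.
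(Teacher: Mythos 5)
Your outline is recognizably a Hingston-style bump-and-spectral-invariant argument adapted to the SDMin setting, and your final pigeonhole on the action spectrum is a legitimate alternative to the paper's exclusion step, so the framing of the reduction is fine. However, the paper proves Theorem~\ref{degConlConj} differently — by reducing it to the filtered statement of Theorem~\ref{theorem}, which asserts $\HF_{-n-1}^{(kc-\eps,\,kc-\delta_k)}(H^{(k)})\neq 0$. Two features of this are essential and are absent from your proposal: the degree is $-n-1$ (one below the degree of the SDMin), and the action window is strictly below $kc$. These come out of the squeezing construction $H_-\leq H\leq H_+$ together with the direct sum decomposition of Proposition~\ref{hom decomp}, which shows the monotone homotopy map $\HF(H_+)\to\HF(H_-)$ in that window is non-zero and factors through $\HF(H^{(k)})$. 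The resulting orbit has CZ index $-n-1$, hence mean index in $[-2n-1,-1]$, which is what makes the iterate-exclusion step in the paper work.

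The genuine gap in your proposal is the existence step for the orbit $y$. The phrase \emph{``pairing this local class with the non-zero global class of $\HF_{-n}(\tilde H)$ through filtered Floer homology''} is not a defined operation, and the conclusion does not follow: a non-zero local Floer class at the perturbed orbit near $x^{(k)}$ can perfectly well die in the global complex, and conversely the global degree-$(-n)$ class may be carried precisely by the perturbed orbit near $x^{(k)}$ — an artifact of your bump — so nothing forces a new orbit of the \emph{original} $H^{(k)}$ to appear in the window $(kc,\,kc+\delta)$. This is exactly the difficulty the paper's squeezing argument and degree shift are designed to address: the filtered group in Theorem~\ref{theorem} is computed for $H^{(k)}$ itself, in a window excluding $kc$, and in degree $-n-1$ so the generator cannot be $x^{(k)}$. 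To make your route work you would need either to carry out the squeezing construction (obtaining the degree shift to $-n-1$), or to turn the bump heuristic into a genuine spectral-invariant estimate of the type in \cite{Hi,GG-loc}: a strict inequality between the relevant spectral value and $kc$, with an explicit control showing the gap tends to zero, and an identification of the carrier. As written, the key step is asserted rather than proved, and working in degree $-n$ rather than $-n-1$ is precisely why the assertion does not hold automatically.
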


As in \cite{GG-gaps,He}, this theorem implies the Conley conjecture as stated in Theorem~\ref{ConlConj}. 
\begin{proof}[Proof of Theorem \ref{ConlConj}]
If there is no symplecticaly degenerate minimum, all one-periodic orbits with non-zero local Floer homology have non-zero mean index. As the mean index $\Delta_H$ grows linearly with iteration and is related to the Conley-Zehnder index grading the Floer homology, the support of local Floer homology shifts away from the interval $\left[-2n,0\right]$, i.e., the local Floer homology in those degrees eventually becomes zero. It follows then by a standard argument as in~\cite{Gi,GG-gaps,Hi,SZ} that for the $k$th iteration of $H$ we have $0= \HF_{-n}(H^{(k)})$.  On the other hand, as is shown in \cite{AS,SW,Vi}, we have $\HF_{-n}(H^{(k)})\cong H_n(\Lambda_0B)\neq 0$ in our conventions.
 This contradiction proves the theorem in the absence of a \SDMin.
If there is a \SDMin, the theorem follows from Theorem~\ref{degConlConj}.
\end{proof}

\begin{rmk}
To prove Theorem \ref{ConlConj} in the case of a closed manifold $M$, one could use a \SDMin\ or a \SDMax, since for a closed symplectic manifold $M$, we know $\HF_{n}(H)\cong H_{2n}(M)\neq 0$ and also $\HF_{-n}(H)\cong H_{0}(M)\neq 0$. The former is used in \cite{Gi,GG-gaps,He} to prove the existence of a \SDMax\ by an analog argument to the above.
In the case of the cotangent bundle $M=T^\ast B$,  the transition to a \SDMin\ is necessary to prove Theorem \ref{ConlConj}. The Floer homology for a Hamiltonian on the cotangent bundle is isomorphic to the homology of the free loop space $\Lambda_0B$ of the base manifold up to a sign change in degree. A \SDMax\ would have degree $n>0$ in the Floer chain groups and would therefore necessarily be zero in homology, since $H_{-n}(\Lambda_0B)=0$. Thus we cannot show the existence of an \SDMax\ by an argument as the above.
The existence of a \SDMin, which has negative degree in the Floer homology, follows from the isomorphism $\HF_{-n}(H)=H_{n}(\Lambda_0 B)\neq 0$ proven in \cite{AS}. Therefore we can use a \SDMin\ to show Theorem \ref{ConlConj} for the cotangent bundle.
\end{rmk}

The proof of Theorem \ref{degConlConj} is based on a Floer theoretical argument establishing

\begin{thm}
\label{theorem}
Let the manifold $M=T^\ast B$ and the Hamiltonian $H$ be as above.
Assume that $H$ has a \SDMin\ at $x$ with
$\mathcal{A}_H(x)=c$. 
Then  for every sufficiently small $\epsilon >0$ there exists some $k_{\epsilon}$ such that
\[
\HF_{-n-1}^{(kc-\eps,\,kc-\delta_k)}(H^{(k)})\neq 0 \text{  for all } k>k_{\epsilon} \text{ and some } \delta_k\in(0,\epsilon).
\]
\end{thm}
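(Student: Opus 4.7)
The proof follows the \SDMax\ strategy from the degenerate Conley conjecture for closed manifolds \cite{Gi, GG-gaps, He, Hi}, with the essential change that we must work with Hamiltonians on the non-compact manifold $M=T^\ast B$ and therefore keep all auxiliary Hamiltonians within the class for which the Floer-homological framework of \cite{AS, Se, Vi} applies. The two main ingredients are: (a) the geometric squeezing properties of a \SDMin, as encoded in Proposition~\ref{SDM geo}, which are analogs of the \SDMax\ properties used in \cite{GG-loc}; and (b) the existence, invariance and monotonicity of filtered Floer homology for Hamiltonians quadratic at infinity.

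The plan is to construct, for every sufficiently small $\eps>0$ and every sufficiently large $k$, an autonomous auxiliary Hamiltonian $F_k$ on $M$ which coincides with $H^{(k)}$ outside a fixed Darboux neighborhood $U$ of $x$ and which, on $U$, equals a radially symmetric ``bowl'' profile with a unique non-degenerate local minimum at $x$, rising monotonically to a flat plateau toward $\partial U$. The shape of the bowl is dictated, via Proposition~\ref{SDM geo}, by the \SDMin\ geometry, in such a way that $F_k\geq H^{(k)}$ on $U$, the only $1$-periodic orbit of $F_k$ contained in $U$ is the constant orbit at $x$ at action exactly $kc$, and its Morse--Bott perturbation produces exactly the auxiliary generators we need. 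Because $F_k$ agrees with $H^{(k)}$ outside a compact set, the growth conditions (H1) and (H2) at infinity are inherited, so $F_k$ lies in the class governed by \cite{AS}.

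The main calculation is a Morse--Bott/local Floer computation for $F_k$. A small Morse perturbation of the plateau breaks the degeneracy near $\partial U$ and produces a family of auxiliary constant orbits concentrated on the rim of the bowl whose actions lie in the window $(kc-\eps,\,kc-\delta_k)$ for some $\delta_k\in(0,\eps)$. An index computation, identical in substance to the \SDMax\ version in \cite{Gi, Hi}, shows that these orbits contribute a non-zero class to $\HF^{(kc-\eps,\,kc-\delta_k)}_{-n-1}(F_k)$; the class cannot be killed inside this action window because the only other $1$-periodic orbit of $F_k$ with action in a neighborhood of $kc$ is the constant orbit at $x$, which sits in degree $-n$ at action precisely $kc$, and hence cannot cancel a class in degree $-n-1$ at action strictly below $kc$. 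One then transports the class to $H^{(k)}$ via the monotone continuation map induced by the linear interpolation $(1-s)F_k + sH^{(k)}$, which by the invariance results of \cite{AS, Se, Vi} is an isomorphism on filtered Floer homology in the relevant action window since the interpolation remains in the class of Hamiltonians satisfying (H1), (H2).

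The main obstacle I anticipate is arranging the construction and the monotone homotopy of $F_k$ so that they remain within the class of Hamiltonians covered by the invariance results of \cite{AS, Se, Vi}, while simultaneously realizing the delicate geometric requirements of the \SDMin\ squeezing from Proposition~\ref{SDM geo}. This is handled by enforcing $F_k=H^{(k)}$ outside a compact neighborhood of $x$, so that all growth and convexity requirements at infinity are inherited from $H$; once this is in place, the computation inside $U$ reduces to the compactly supported Darboux-chart picture familiar from \cite{Gi, GG-gaps, He}, with only cosmetic sign changes required to pass from the \SDMax\ to the \SDMin\ case.
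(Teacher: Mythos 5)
Your proposal reproduces the general strategy (reduce via Proposition~\ref{SDM geo} to a constant \SDMin\ with small Hessian, squeeze by auxiliary Hamiltonians agreeing with $H^{(k)}$ at infinity, compute locally, and transport), but it has a genuine gap at the transport step that makes the argument fail as stated.

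You construct a single auxiliary Hamiltonian $F_k\geq H^{(k)}$, compute that $\HF_{-n-1}^{(kc-\eps,\,kc-\delta_k)}(F_k)\neq 0$, and then assert that the monotone continuation map induced by $(1-s)F_k+sH^{(k)}$ is an isomorphism on filtered Floer homology \emph{because} the interpolation stays in the class (H1)--(H2). That last inference is false: the invariance results of \cite{AS,Se,Vi} (and the standard continuation-argument machinery) give canonical isomorphisms only on the \emph{unfiltered} Floer homology. For \emph{filtered} Floer homology, a monotone decreasing homotopy produces a map $\HF^{(a,b)}(F_k)\to\HF^{(a,b)}(H^{(k)})$, but this map can perfectly well be zero --- actions of orbits move under the homotopy, the filtered complex is not a homotopy invariant, and nothing forces a class in $\HF(F_k)$ to survive. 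This is precisely the point the squeezing method is designed to fix: the paper constructs \emph{two} functions $H_-\leq H\leq H_+$ and shows that the monotone homotopy map $\HF^{(a,b)}(H_+)\to\HF^{(a,b)}(H_-)$ is nonzero by a direct computation not involving $H$; since that map factors through $\HF^{(a,b)}(H)$, the middle group is forced to be nonzero. Omitting the lower bound $H_-$ leaves the transport step unsupported, and there is no way to close this gap by appealing to invariance.

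A second, smaller issue: you do not isolate the contribution of the orbits near $x$ from all other one-periodic orbits of the auxiliary Hamiltonian. Your remark that the class ``cannot be killed inside this action window because the only other $1$-periodic orbit of $F_k$ with action near $kc$ is the constant at $x$'' is a heuristic about the local picture, but the filtered Floer chain complex of $F_k$ on the full cotangent bundle can have many other generators, and Floer trajectories between the local region and the rest of $M$ must be excluded. The paper handles this with a direct sum decomposition of filtered Floer homology (Proposition~\ref{hom decomp}, following \cite{He}, in the spirit of \cite{GG-gaps,Us}), which requires that the auxiliary Hamiltonian be autonomous on a shell around $x$ with controlled slope, and that the length of the action window be smaller than an energy threshold depending on that slope; this in turn dictates how $\delta_k$ is chosen relative to $k$ and $\eps$. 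Without invoking such a decomposition (or some equivalent energy/localization estimate), the local computation does not by itself control the filtered Floer homology of $F_k$ on the whole manifold, let alone that of $H^{(k)}$.
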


\noindent Here $H^{(k)}$ denotes the one-periodic Hamiltonian $H$ viewed as $k$-periodic function for some integer $k$, see also Section~\ref{sec:Hamflow}. This theorem implies Theorem~\ref{degConlConj} and thus also Theorem~\ref{ConlConj}.

\begin{proof}[Proof of Theorem \ref{degConlConj}]
Arguing by contradiction, we assume that for every sufficiently large period all periodic orbits are iterated. Let $k$ be a sufficiently large prime. Then every $k$-periodic orbit is an iterated one-periodic orbit. By Theorem~\ref{theorem}, there exists a $k$-periodic orbit $y_k$ with
\[
-2n-1 \leq \Delta_{H^{(k)}}(y_k) \leq -1
\]
in the Floer chain group in degree $-n-1$.
The mean index of an iterated orbit growths linearly with iteration. Thus for a one-periodic orbit, the mean indices of the iterations are zero whenever the one-periodic orbit has mean index zero. If a one-periodic orbit has non-zero mean index, then the mean index of an iteration is outside the interval $\left[-2n-1,\,-1\right]$ for a sufficiently large order of iteration. The assumption that there are only finitely many one-periodic orbits implies that for sufficiently large $k$, no $k$th iteration of a one-periodic orbit has mean index in $\left[-2n-1,\,-1\right]$. Thus the orbit $y_k$ cannot be an iterated one-periodic orbit in contradiction to the choice of $k$.
\end{proof}

\subsection{Organization of the paper}
\label{sec:orga}
In Section \ref{sec:prelims}, we set the notation and conventions used in this paper and discuss some properties of the Floer homology.
Theorem \ref{theorem} is proven in Section \ref{sec:Beweis}, starting with a geometric description of symplectically degenerate extrema in Section~\ref{sec:sdmin}.

\subsection{Acknowledgments}
The author is grateful to Viktor Ginzburg for showing her this problem, helping with the solution and his kind, thoughtful advice.
Furthermore, the auther is very grateful to Alberto Abbondandolo, Ba\c sak G\"urel and Alexandru Oancea for valuable comments.

\section{Preliminaries}
\label{sec:prelims}

In this section we will introduce the notation used in this paper and review some of the basic facts needed in order to prove the theorems.

Let $B$ be a closed, oriented manifold and let $M=T^\ast B$ be the cotangent bundle of $B$, equipped with the canonical symplectic structure $\omega$.

\subsection{Hamiltonian flows}
\label{sec:Hamflow}

All considered Hamiltonians $H$ on $M=T^\ast B$ are assumed to be one-periodic in time, i.e., functions $H\colon S^1\times M\to \mathbb{R}$ whith $S^1=\mathbb{R}/\mathbb{Z}$, and we will set $H_t(x)=H(t,\,x)$.
A one-periodic Hamiltonian $H$ can also be viewed as $k$-periodic for any integer $k$. For our argument, it is sometimes crucial to keep track of the period we are considering. If a one-periodic Hamiltonian $H$ is viewed as $k$-periodic, we refer to it as the \textit{$k$th iteration} of $H$ and denote it by $H^{(k)}$. 
In particular, the function $H^{(k)}$ is a function $H^{(k)}\colon \R/k\Z\times M\to\R$.

Throughout the paper, we assume all Hamiltonians to satisfy conditions (H1) and (H2).
These conditions imply that $H$ grows quadratically at infinity, i.e.,
\begin{equation}
H(t,q,p)\geq \frac{1}{2}h_0\|p\|^2-h_3
\label{quadgrow}
\end{equation}
for some suitable constant $h_3$.

\begin{example}
\label{classical ex}
In particular, the above growth conditions on the Hamiltonian hold for all conservative Hamiltonians describing systems from classical mechanics on $B$, i.e., Hamiltonians of the form $H(t,p,q)=\frac{1}{2}\|p\|^2+V(q)$. More generally, one could also use Hamiltonians of the form $H(t,p,q)=\frac{1}{2}\|p\|_t^2+V(t,p,q)$, where in every fiber the function $V$ is constant outside a compact set in $M=T^\ast B$ and the metric $\|\cdot\|_t$ can be chosen to be time-dependent with both $V$ and the metric are periodic in time.
The conditions are also satisfied for periodic in time electro-magnetic Hamiltonians, i.e., the Hamiltonians describing the motion of a charge in an exact magnetic field and a conservative force field; see, e.g., \cite{Gi-magn}.  For these Hamiltonians, we assume the magnetic field, the metric and the potential to be periodic in time, since we only work with time-periodic Hamiltonians here.
\end{example}

As the symplectic form $\omega$ is non-degenerate,  the Hamiltonian vector field $X_H$ of $H$ is well-defined by the equation $i_{X_H}\omega=-dH$. The time-1-map of the flow of $X_H$ is called a Hamiltonian diffeomorphism and denoted by~$\varphi_H$.

The composition $\varphi_H^t\circ\varphi_K^t$ of two Hamiltonian flows is again the flow of a Hamiltonian vector field. It is generated by the Hamiltonian
\[
(K\# H)_t=K_t+H_t\circ \varphi_K^{-t}.
\]
In general, this function is not necessarily one-periodic, even if both $H$ and $K$ are one-periodic Hamiltonians. But $K\# H$ will be one-periodic if both $H$ and $K$ are one-periodic and in addition $K$ generates a loop of Hamiltonian diffeomorphisms. This will always be the case in this paper.

\subsection{Periodic orbits and Floer homology}
\label{sec:OrbitsandHF}
The Hamiltonian action is defined by the functional
\[
\mathcal{A}_H(x)=-\int_{D^2}u^\ast\omega +\int_{S^1}H_t(x(t))\ dt
\]
on the space of contractible closed loops on $M$. Here $u\colon D^2\to M$ is a continuous map with $u|_{S^1}=x$.  Since the symplectic form $\omega$ is exact, the action $\mathcal{A}_H(x)$ is independent of the choice of the capping disk $u$. In this paper, we only work with contractible periodic orbits and every periodic orbit is assumed to be contractible, even if this is not explicitly stated.

A one-periodic orbit $x$ of $H$ is said to be \textit{non-degenerate} if the linearized return map $d\varphi_H\colon T_{x(0)}M\to T_{x(0)}M$ does not have one as an eigenvalue. Following \cite{SZ}, we call a degenerate orbit \textit{weakly non-degenerate} if at least one eigenvalue is not equal to one and \textit{strongly degenerate} otherwise. We refer to a Hamiltonian $H$ as \textit{non-degenerate}, if all its one-periodic orbits are non-degenerate.

Let $\Delta_H(x)$ denote the \textit{mean index} of a one-periodic orbit $x$ of $H$. Roughly speaking, the mean index measures the sum of rotations of eigenvalues of $d(\varphi_H^t)_{x(t)}$ lying on the unit circle.
For exact details on the technical definition and the properties of the mean index, we refer the reader to \cite{Lo-ind,SZ}. A list of properties of the mean index can also be found in \cite{GG-gaps}. 

Up to sign, we define the Conley-Zehnder index as in \cite{Sa,SZ} and use the normalization such that for a non-degenerate minimum $x$ of an autonomous Hamiltonian with small Hessian we have $\mu_{\CZ}(x)=-n$, see \cite{GG-gaps}.
Together with the growth properties of the mean index, the most crucial property of the Conley-Zehnder index for our argument is the inequality $|\Delta_H(x)-\mu_{\CZ}(x)|\leq n$ for all periodic orbits $x$ and the fact that this inequality is strict, when $x$ is weakly non-degenerate.

The $k$th iteration of an orbit $x$ is denoted by $x^k$. The mean index and the action are both homogeneous with respect to iteration and satisfy the iteration formulas
\[
\mathcal{A}_{H^{(k)}}(x^k)=k\mathcal{A}_H(x) \text{ and } \Delta_{H^{(k)}}(x^k)=k\Delta_H(x).
\]

We define the Floer homology for a non-degenerate Hamiltonian $H$ as in \cite{Sa,SZ}. The homology is graded by the Conley-Zehnder index.
The Floer chain groups are generated by the contractible one-periodic orbits of $H$ and for the definition of the boundary operator, we consider solutions of the Floer equation

\begin{equation}
\frac{\partial u}{\partial s}+J_t(u)\frac{\partial u}{\partial t}=-\nabla H_t(u)
\label{Floer}
\end{equation}

\noindent with finite energy. 
As is well known, Floer trajectories $u$ for a non-degenerate Hamiltonian $H$ with finite energy $E(u)$ converge to periodic orbits $x$ and $y$ as $s$~goes to $\pm\infty$ and satisfy
\[
E(u):=\int_{-\infty}^{\infty}\int_{S^1}\left\|\frac{\partial u}{\partial s}\right\|^2\ dt\ ds=\mathcal{A}_H(x)-\mathcal{A}_H(y).
\] 
The Floer boundary operator counts Floer trajectories converging to periodic orbits $y$ and $x$ as $s\to\pm\infty$.

The conditions (H1) and (H2) on the Hamiltonian $H$ imply that solutions to the Floer equation \eqref{Floer} connecting two periodic orbits of $H$ are uniformly bounded in the $C^0$-norm, if the almost complex structure $J$ is sufficiently close to the standard almost complex structure on $M=T^\ast B$, see \cite{AS}. Thus the Floer homology is well-defined, since the one-dimensional the moduli spaces are compact by the same argument as in the case of a closed manifold.

 It is also proven in \cite{AS} that there exists an isomorphism
\begin{equation}
\HF_\ast(H)\cong H_{-\ast}(\Lambda_0B),
\label{Floeriso}
\end{equation}
where $\Lambda_0B$ is the space of contractible loops on $B$. (See also \cite{Se,SW,Vi} for similar results for somewhat different classes of Haminltonians.)
In particular, this implies $\HF_{-n}(H)\cong H_n(\Lambda_0 B)\neq 0$ for any Hamiltonian, which satisfies conditions (H1) and (H2).

This construction extends by continuity from non-degenerate Hamiltonians to all Hamiltonians satisfying conditions (H1) and (H2), see \cite{Sa}.

For two non-degenerate Hamiltonians $H^0$ and $H^1$, a homotopy from $H^0$ to $H^1$ induces a homomorphism of chain complexes which gives an isomorphism between the Floer homology groups $\HF_{*}(H^0)$ and $\HF_{*}(H^1)$ which is independent of the choice of homotopy. This map is defined analogously to the Floer boundary operator using the Floer equation \eqref{Floer} with the homotopy $H^s$ on the right hand side. Again it is proven in \cite{AS} that the trajectories connecting two one-periodic orbits are uniformly bounded and thus the one-dimensional moduli spaces are compact and the homotopy map is well-defined.

As the action decreases along Floer trajectories of a non-degenerate Hamiltonian $H$, we also have well-defined chain complexes only involving orbits with action in an interval $(a,\,b)$ for regular values $a$ and $b$ of the action functional $A_H$. The set of critical values of the action is called the action spectrum $\mathcal S(H)$. This complex gives rise to the \textit{filtered Floer homology} $\CF_{*}^{(a,\,b)}(H)$. The construction of filtered Floer homology also extends by continuity to degenerate Hamiltonians, since in this case the Floer homology is independent of the choice of a sufficiently small, non-degenerate perturbation.
Similarly to above, we get a homomorphism in the filtered homology induced by a homotopy between two non-degenerate Hamiltonians, if the homotopy is monotone decreasing at all point and for all times. This map is referred to as the \textit{monotone homotopy map} between filtered Floer homology groups.

By construction of the filtered Floer homology for non-degenerate Hamiltonians, we have a long exact sequence of filtered Floer homology groups
\begin{equation}
\label{eq:longexact}
\cdots\rightarrow \HF_{*}^{(a,\,b)}(K)\rightarrow \HF_{*}^{(a,\,c)}(K)\rightarrow \HF_{*}^{(b,\,c)}(K)\rightarrow \HF_{*-1}^{(a,\,b)}(K)\rightarrow\cdots
\end{equation}
for any non-degenerate Hamiltonian $K$ with $a,b,c\notin\mathcal S(K)$. The maps of this exact sequence commute with the monotone homotopy map.

\subsection{Local Floer homology}
\label{sec:locHF}

Let $x$ be an isolated one-periodic orbit of a Hamiltonian
$H\colon S^1\times M\to \R$. Pick a sufficiently small tubular
neighborhood $U$ of $x$ and consider a non-degenerate $C^2$-small
perturbation $\tilde{H}$ of $H$ supported in $U$.  Every (anti-gradient)
Floer trajectory $u$ connecting two one-periodic orbits of $\tilde{H}$ lying
in $U$ is also contained in $U$, provided that $\|\tilde{H}-H\|_{C^2}$ and
$supp(\tilde{H}-H)$ are small enough.  Thus, by the compactness and gluing
theorems, every broken anti-gradient trajectory connecting two such
orbits also lies entirely in $U$ by a similar argument as in Proposition \ref{hom decomp}, see also Remark \ref{rmk:locFloer}. The vector space (over $\Z_2$) generated by one-periodic
orbits of $\tilde{H}$ in $U$ is a complex with (Floer) differential defined
in the standard way. The continuation argument (see, e.g.,
\cite{SZ}) shows that the homology of this complex is independent
of the choice of $\tilde{H}$ and of the almost complex structure. We refer
to the resulting homology group $\HF_*(H,x)$ as the \emph{local
Floer homology} of $H$ at $x$.

\begin{example}
Assume that $x$ is a non-degenerate one-periodic orbit of $H$ with $\mu_{\CZ}(x)=k$.  Then
$\HF_l(H,x)=\Z_2$ when $l=k$ and $\HF_l(H,x)=0$ otherwise.
\end{example}

By definition, the \emph{support} of $\HF_*(H,x)$ is the collection
of integers $k$ such that $\HF_k(H,x)\neq 0$. Clearly, the group
$\HF_*(H,x)$ is finitely generated and hence supported in a finite
range of degrees, namely in $(\Delta_H(x)-n,\,\Delta_H(x)+n)$.

For a more detailed definition and a discussion of the properties of local Floer homology, see also \cite{Gi,GG-loc,GG-gaps}.

\section{Proof of  Theorem \ref{theorem}}
\label{sec:Beweis}
\subsection{Outline of the proof}
\label{sec:outline}

The key to proving Theorem \ref{theorem} is a geometrical description of \SDMins\ given in Proposition \ref{SDM geo}. In particular, we can assume the \SDMin\ to be a constant orbit $x_0$. Furthermore,  we can assume that $x_0$ is a strict local minimum of $H$ and that $H$ has arbitrarily small Hessian at $x_0$.

Then we use the squeezing method from \cite{BPS,Gi,GG-Rel} and construct Hamiltonians $H_+$ and $H_-$ such that $H_-<H<H_+$.
It suffices to show that a linear homotopy from $H_+$ to $H_-$ induces a non-zero map between the filtered Floer homology groups of $H_\pm$ for the action interval in question. This map factors through the filtered Floer homology group of $H$, which can therefore not be trivial.

As functions of the distance from $x_0$, the functions $H_+$ and $H_-$ are constructed similarly to the functions used in \cite{Gi,GG-gaps,He}; see Section \ref{sec:functions} for details.

For the Hamiltonians $H_\pm$ we use the direct sum decomposition from Proposition~\ref{hom decomp}, which has been established in \cite{He}. 
To prove that the monotone homotopy map is non-zero, it suffices to show that the restriction to one of the summands is an isomorphism.

The considered summand $\HF_*(H_\pm,\,U)$ for a neighborhood $U$ of the \SDMin\ $x_0$ depends only on the restriction of the functions $H_\pm$ to $U$ and the symplectic structure in $U$ and is independent of the ambient manifold. Thus we can view $U$ as an open set in any symplectic manifold of dimension $2n$ and the theorem follows as in the closed symplectically asherical case in \cite{Gi}.

\subsection{Geometric characterization of \SDMins}
\label{sec:sdmin}

In this section we state some geometric properties of \SDMins. The existence of a \SDMin\ enters the proof of Theorem \ref{theorem} only via those properties.

For the formulation of the geometric characterization of a \SDMin\ we first recall the definition of the norm of a tensor with respect to a coordinate system.
On a finite-dimensional vector space the norm $\left\|v\right\|_{\Xi}$ of a tensor $v$ with respect to a coordinate system $\Xi$ is by definition the norm of $v$ with respect to the inner product for which $\Xi$ is an orthonormal basis. For a coordinate system $\xi$ on a manifold $M$ near a point $x_0$, the natural coordinate basis in $T_{x_0}M$ is also denoted by~$\xi$.

\begin{prop}[\cite{GG-loc,GG-gaps}]
\label{SDM geo}
Let $x$ be a \SDMin\ of a Hamiltonian $H$ and let $x_0=x(0)\in M$. Then there exists a sequence of contractible loops $\eta_i$ of Hamiltonian diffeomorphisms such that $x(t)=\eta_i^t(x_0)$, i.e each loop $\eta_i$ sends $x_0$ to $x$.
Furthermore, the Hamiltonians $K^i$ given by $\varphi_H^t=\eta_i\circ\varphi_{K^i}^t$ and the loops $\eta_i$ satisfy the following conditions:
\begin{itemize}
\item [\textbf{(K1)}] the point $x_0$ is a strict local minimum of $K_t^i$ for $t\in S^1$;
\item [\textbf{(K2)}] there exist symplectic bases $\Xi^i$ of $T_{x_0}M$ such that 
\[
\left\|d^2(K_t^i)_{x_0}\right\|_{\Xi^i}\rightarrow 0 \text{ uniformly in } t\in S^1;
\]
\item [\textbf{(K3)}] the loop $\eta_i^{-1}\circ \eta_j$ has identity linearization at $x_0$ for all $i$ and $j$ (i.e. for all $t\in S^1$ we have $d\big((\eta_i^t)^{-1}\circ \eta_j^t\big)_{x_0}=I$), and is contractible to $id$ in the class of such loops.
\end{itemize}
\end{prop}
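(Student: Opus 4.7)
The plan is to adapt the proof of the analogous geometric characterization of symplectically degenerate maxima from \cite{GG-loc,GG-gaps} to the minimum case. The argument is essentially symmetric: a non-degenerate local minimum of an autonomous Hamiltonian with small Hessian contributes to local Floer homology in degree $-n$ in exactly the way a maximum contributes in degree $+n$ under the normalization fixed in Section~\ref{sec:OrbitsandHF}.

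First I would reduce to the case of a constant orbit. Since $x$ is contractible, a capping disk for $x$ allows one to construct a contractible loop $\eta^t$ of Hamiltonian diffeomorphisms of $M$ satisfying $\eta^t(x_0)=x(t)$. Defining $K$ via $\varphi_H^t=\eta^t\circ\varphi_K^t$ turns $x_0$ into a constant one-periodic orbit of $K$, and by invariance the mean index and local Floer homology are preserved: $\Delta_K(x_0)=0$ and $\HF_{-n}(K,x_0)\neq 0$. Next I would modify $\eta$ so that the linearized flow $d(\varphi_K^t)_{x_0}$ is trivial. Because $\Delta_K(x_0)=0$, the loop $t\mapsto d(\varphi_K^t)_{x_0}$ in the symplectic linear group has vanishing mean index and is therefore contractible; composing $\eta^t$ with a loop of linear symplectic automorphisms extended to Hamiltonian diffeomorphisms supported in a neighborhood of $x_0$ removes this linearization without disturbing $x_0$ as a constant orbit.

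With a constant orbit and trivial linearization in place, I would establish (K1) by a local Floer-theoretic argument: if $x_0$ were not a strict local minimum of some $K_t$, a small time-dependent perturbation of $K$ would produce non-degenerate orbits of strictly lower Conley-Zehnder index than $-n$ or force the bottom-degree class to vanish under continuation, contradicting the SDMin hypothesis. For (K2), I would construct the sequence by pre-composing $\eta$ with a further sequence of loops of Hamiltonian diffeomorphisms supported near $x_0$ with identity linearization, chosen so as to rescale the Hamiltonian and shrink its Hessian uniformly in a suitable family of symplectic bases $\Xi^i$. Property (K3) is then automatic by construction: each $\eta_i^{-1}\circ\eta_j$ is a composition of loops with identity linearization at $x_0$, and the natural interpolation furnishes the contracting homotopy within the class of such loops.

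The main obstacle is the Hessian-flattening step, which requires showing that the nonvanishing of $\HF_{-n}$ persists under arbitrarily fine local modifications and controls the sign in which $x_0$ is a strict local minimum. The argument in \cite{GG-loc} handles this by combining an explicit local normal form near $x_0$ with the continuation invariance of local Floer homology under $C^2$-small Hamiltonian perturbations, and that argument applies verbatim here after swapping the roles of top and bottom degree in the local homology.
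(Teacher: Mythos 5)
Your proposal takes a genuinely different route from the paper. You propose to redo, for minima, the full technical construction of \cite{GG-loc,GG-gaps,Gi} (reduction to a constant orbit, local Floer argument for strict extremality, Hessian-flattening via loops with identity linearization). The paper instead gives a short reduction: it passes to the inverse Hamiltonian $H^{inv}_t=-H_t\circ\varphi_H^t$ and the time-reversed orbit $y(t)=x(-t)$, observes that $\Delta_{H^{inv}}(y)=-\Delta_H(x)=0$ and $\HF_n(H^{inv},y)=\HF_{-n}(H,x)\neq 0$ so $y$ is a \SDMax\ of $H^{inv}$, invokes the known characterization of \SDMaxs\ to produce $G^i$ and $\gamma_i$, and then defines $\eta_i^t=\gamma_i^{-t}$ and $K^i_t=-G^i_t\circ\varphi_{G^i}^t$, with (K1)--(K3) following formally from the corresponding properties of $G^i$ and $\gamma_i$. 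The paper's approach treats the \SDMax\ result as a black box and costs half a page; your approach would, if carried out in full, re-prove the technical core of those references.

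Beyond efficiency, there is a concrete problem in your sketch. You claim an extra normalization step that is neither needed nor achievable: you write that because $\Delta_K(x_0)=0$, ``the loop $t\mapsto d(\varphi_K^t)_{x_0}$ in the symplectic linear group has vanishing mean index and is therefore contractible,'' and you use this to make $d(\varphi_K^t)_{x_0}$ trivial. But $t\mapsto d(\varphi_K^t)_{x_0}$ is a \emph{path} from $I$ to $d(\varphi_K)_{x_0}$, not a loop, and for a degenerate orbit $d(\varphi_K)_{x_0}$ is in general $\neq I$ even though all its eigenvalues equal one. Conjugating by loops of linear symplectomorphisms cannot kill this linearization. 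Note also that (K3) only asserts that the \emph{differences} $\eta_i^{-1}\circ\eta_j$ have identity linearization; no claim is made or needed about $d(\varphi_{K^i}^t)_{x_0}$. Your (K1) argument (producing orbits of index $<-n$ after perturbation) is also left at the level of a gesture and would need to be filled in carefully, since it is precisely the delicate part of \cite{GG-loc}. If you want to pursue the direct route, the right move is to drop the spurious linearization-trivialization step and carry out the Hessian-flattening argument from \cite{GG-loc} with maxima replaced by minima; but the paper's inversion trick is a cleaner and essentially zero-cost reduction, and you should compare against it.
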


A proof of the analogous proposition for \SDMaxs\ can be found in \cite{GG-loc,GG-gaps}. Here we are only going to show how this proposition follows from the case of a \SDMax.

\begin{definition}
\label{SDMax}
An isolated $k$-periodic orbit $x$ of a $k$-periodic Hamiltonian $H$ is called a
 \textit{\SDMax} of $H$ if \[\Delta_H(x)=0 \text{ and } \HF_n(H,x)\neq 0.\]
\end{definition}

When the concept of a \SDMax\ was introduced in~\cite{Hi} by Hingston and the first formal definition given in \cite{Gi}, the geometric characterization was used as a definition of \SDMaxs. It is shown in \cite{GG-loc} that this characterization is equivalent to the definition and also that (K1) and (K2) already imply (K3) as a formal consequence. 

\begin{proof}[Proof of Proposition \ref{SDM geo}]
The (local) Floer homology of $H^{inv}=-H_t\circ\varphi_H^{t}$ generating the inverse flow of $\varphi_H^t$ can be calculated from the Floer homology of $H$, since all one-periodic orbits of $H$ give rise to one-periodic orbits of $H^{inv}$ by reversing the orientation. The Conley-Zehnder index and the action of a periodic orbit of $H^{inv}$ are the negatives of the Conley-Zehnder index and the action of the corresponding periodic orbit of $H$; and the Floer trajectories are the Floer trajectories of $H$ with $s$ replaced by $-s$ and orientation in the $t$-direction reversed.

Let $y(t)=x(-t)$ be the \SDMin\ traversed in opposite direction. Then $y$ is a 1-periodic orbit of the Hamiltonian $H_t^{inv}$.
The properties of the \SDMin\ $x$ imply that $\Delta_{H^{inv}}(y)=-\Delta_H(x)=0$ and $\HF_n(H^{inv},y)=\HF_{-n}(H,x)\neq 0$. Therefore, $y$ is a \SDMax\ of $H^{inv}$ and we can use the geometric characterization of \SDMaxs\ from \cite{GG-loc,GG-gaps,He} to construct Hamiltonians $G_t^i$ and loops $\gamma_i^t$ such that 

\begin{enumerate}
\item the point $x_0$ is a strict local maximum of $G_t^i$ for $t\in S^1$,
\item there exist symplectic bases $\Xi^i$ of $T_{x_0}M$ such that 
\[
\left\|d^2(G_t^i)_{x_0}\right\|_{\Xi^i}\rightarrow 0 \text{ uniformly in } t\in S^1,
\]
\item the loop $\gamma_i^{-1}\circ \gamma_j$ has identity linearization at $x_0$ for all $i$ and $j$ and is contractible to $id$ in the class of such loops,
\item $\varphi_H^{-t}=\varphi_{G^i}^t\circ\gamma_i^t$.
\end{enumerate}

Now we define the loops $\eta_i$ and the Hamiltonians $K^i$ by inverting the loops $\gamma_i$ and the flows of the Hamiltonians $G^i$, i.e., $\eta_i^t=\gamma_i^{-t}$ and $K_t^i=-G_t^i\circ\varphi_{G^i}^{t}$. Then we have $\varphi_H^t=\eta_i\circ\varphi_{K^i}^t$, as required in the proposition.

The properties (K1), (K2) and (K3) of the loops $\eta_i$ and the Hamiltonians $K^i$ follow directly from the properties (i), (ii) and (iii) of $\gamma_i$ and $G^i$ with the same coordinate systems $\Xi^i$ of $T_{x_0}M$.
\end{proof}

\begin{rmk}
The equation $\varphi_H^{-t}=\varphi_{G^i}^t\circ\gamma_i^t$ in (iv) is a modification of the geometric characterization of \SDMaxs\ in \cite{GG-loc,GG-gaps,He}. In those papers and in the definition of a \SDMax\ in \cite{Gi}, the requirement takes the form $\varphi_H^{-t}=\gamma_i^t\circ\varphi_{G^i}^t$. But in the construction of the loops and the Hamiltonians, the order of composition is not crucial, see \cite{Gi} for details.
In the proof of Theorem \ref{theorem}, we need the order of composition to be $\eta_i\circ\varphi_{K^i}^t$ to ensure that a composition $\eta_i^t\circ\varphi_F^t$ for an autonomous Hamiltonian $F$ is generated by a one-periodic Hamiltonian. This would not necessarily be the case if the order of composition is changed.
\end{rmk}

\begin{rmk}
The loops $\eta_i^{-1}\circ \eta_j$ are loops of Hamiltonian diffeomorphisms fixing $x_0$.
The construction of the loops $\gamma_i$ in \cite{Gi} for the case of a \SDMax\ shows that the loops $\gamma_i$ can be chosen such that $\gamma_i^{-1}\circ \gamma_j$ are supported in an arbitrarily small neighborhood of $x_0$. Hence also the loops $\eta_i=\gamma_i^{-1}$ can be chosen to be supported near $x_0$.
\label{supp eta}
\end{rmk}

\subsection{The functions $H_+$ and $H_-$}
\label{sec:functions}

By Proposition \ref{SDM geo} above, it suffices to prove the theorem for the function $K^1$ and the constant orbit $x_0$ as \SDMin. We keep the notation $H$ for $K^1$. Fix a neighborhood $W$ of $x_0$ such that $x_0$ is a strict global minimum of $H$ on $W$ and that there exist Darboux coordinates for $M$ in $W$. Furthermore, choose $W$ such that $\|p\|\leq C$ in $W$ for some possibly very large constant $C>0$. 
We also fix now an almost complex structure $J$ on $M$ that is compatible with $\omega$.

Let $U$ and $V$ be balls centered at $x_0$ and contained in $W$. 
We then construct the function $H_-$ and an auxiliary function $F$ to be of the form shown in Figure \ref{fig:functions}.

More concretely, we fix balls 
\[
B_{r_-}\subset B_{r_+}\subset B_r\subset U\subset V\subset B_R\subset B_{R_-}\subset B_{R_+}\Subset W
\]
centered at $x_0$.
In $W$, the function $H_-$ takes the following form as a function of the distance from $x_0$:
\begin{itemize}
\item $H_-\geq H$ and $H_-\equiv c=H(x_0)$ on $B_{r_-}$;
\item on $B_{r_+}\setminus B_{r_-}$ the function $H_-$ is monotone increasing;
\item on $B_r\setminus B_{r_+}$ the function is constant;
\item in the shell $B_R\setminus B_r$ the function is monotone decreasing, linear as a function of the square of the distance from $p$ with small slope $\alpha$ on $V\setminus U$ such that there are no one-periodic orbits in $V\setminus B_r$;
\item the function $H_-$ is again constant on $B_{R_-}\setminus B_R$ with a value less than $c$;
\item it is monotone decreasing on $B_{R_+}\setminus B_{R_-}$;
\item outside $B_{R_+}$, the function $H_-$ is constant and equal to its minimum.
\end{itemize}

\begin{figure}
\label{fig:functions}
\centering
\scalebox{0.4}[0.3]{
   \includegraphics{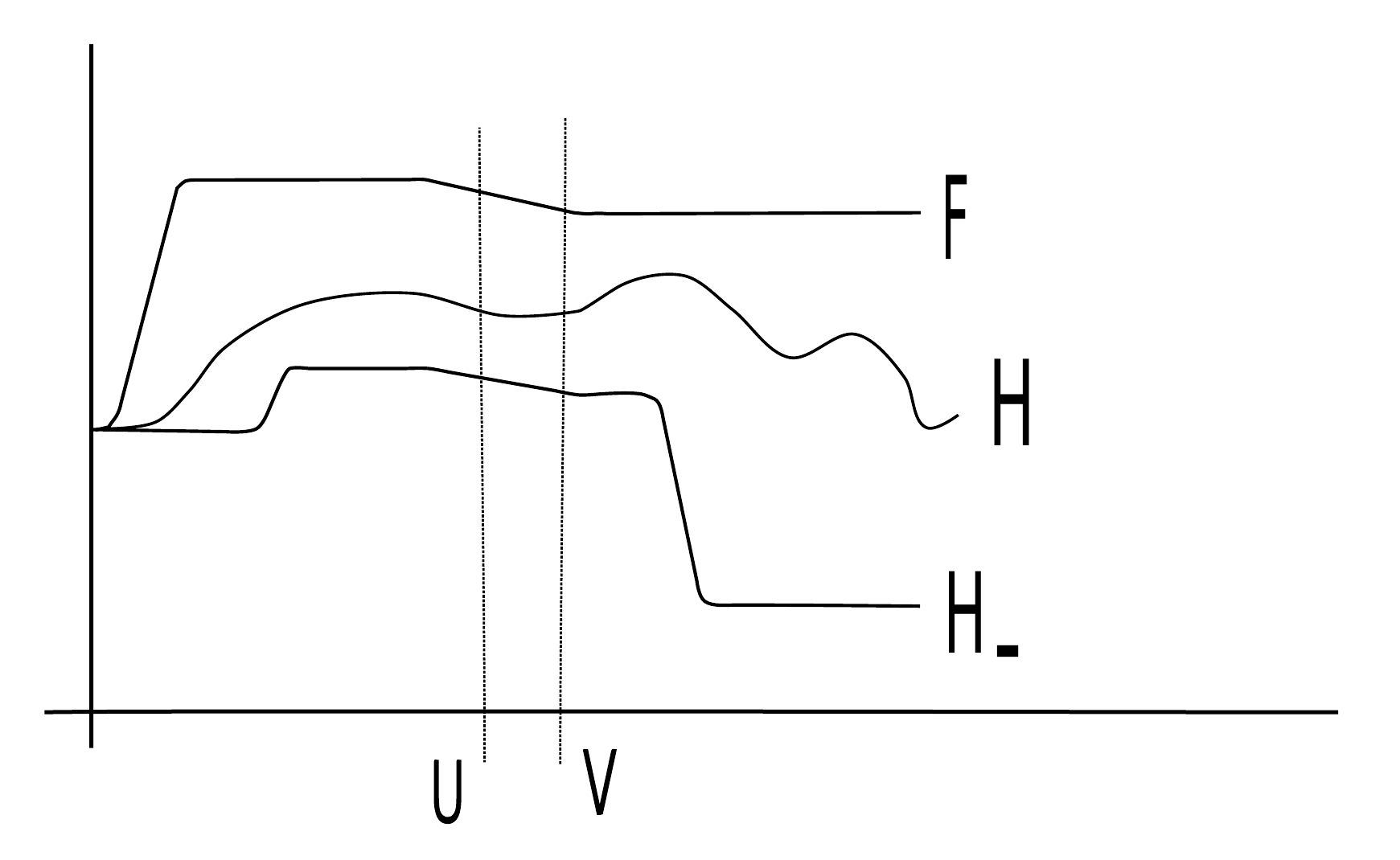}}
\caption{The functions $H_-$ and $F$ as functions of the distance from $x_0$.}
\end{figure}   

The function $H_-$ also has to meet the requirements (H1) and (H2). This can be realized by defining $H_-$ to be a positive, non-degenerate quadratic form for $\|p\|\geq C$ i.e. we choose $H_-$ to satisfy \eqref{quadgrow} and thus conditions (H1) and (H2). The coefficients of the quadratic form are chosen such that we have $H_-\leq H$ on $M$. Thus $H_-$ is of the general form mentioned in Example~\ref{classical ex}.

This function is constructed very similar to the ones used in \cite{Gi,GG-gaps,He} near a \SDMax. More concretely, up to an additive constant and the quadratic growth condition, this is the negative of the function $H_+$ used in \cite{He}, since we are using a \SDMin\ here.
See also Section \ref{sec:HF} for details about the choices made in the construction of $H_-$.

Let us now turn to the construction of $H_+$. We use the existence of a \SDMin. The geometrical characterization of \SDMins\ in Proposition \ref{SDM geo} and Remark \ref{supp eta} imply that we have
\begin{itemize}
\item a loop $\eta^t$ of Hamiltonian diffeomorphisms fixing $x_0$, which is supported in~$U$ and
\item a system of coordinates $\xi$ on a neighborhood $W$ of $x_0$
\end{itemize}
such that the Hamiltonian $K$ generating the flow $\eta^{-t}\circ\varphi_H^t$ has a strict local minimum at $x_0$ and $\max_t\left\|d^2(K_t)_{x_0}\right\|_{\xi}$ is sufficiently small. The loop $\eta$ is contractible in the class of loops having identity linearization at $x_0$.
Let $G_s^t$ be a Hamiltonian generating such a homotopy $\eta_s^t$ normalized by $G_s^t(x_0)\equiv 0$. We then normalize $K$ by the additional requirement that $K_t(x_0)\equiv c$ (or equivalently that $H=K\# G^0$).

Then there exists a function $F$, depending on the coordinate system $\xi$, such that
\begin{itemize}
\item $\left\|d^2F_{x_0}\right\|_{\xi}$ is sufficiently small,
\item $F\geq K$ and $F(x_0)=c=H(x_0)$ is the global minimum of $F$.
\end{itemize}
To be more precise, in $B_r$ the function $F$ is the negative of a bump function centered at $x_0$. Furthermore, $F$ is chosen to differ from $H_-$ only by a constant in $V\setminus U$. The last condition is needed to have the direct sum decomposition in Proposition~\ref{hom decomp} below for the filtered Floer homology groups of $H_-$ and $F$ and to ensure that this decomposition is compatible with the monotone homotopy map for a linear homotopy from $H_-$ to $F$.
Outside $ B_R$ and for $\|p\|\leq C$, we choose $F$ to be constant. As for $H_-$, we define $F$ to be of the form from Example~\ref{classical ex}, i.e., a positive, non-degenerate quadratic form for $\|p\|>C$, to ensure that $F$ satisfies the growth conditions (H1) and (H2). The coefficients of the quadratic form are fixed such that all functions of the homotopy $F^s=G^s\# F$ satisfies $F^s\geq H_-$ and $F^1=F\geq K$.

This is an isospectral homotopy, i.e., a homotopy such that the action spectrum $\mathcal{S}(F^s)$ is independent of $s$.
We define the function $H_+$ by
\[
H_+:=G^0\# F\geq G^0\# K=H.
\]
Since $\eta$ is supported in $U$, the function $G^0$ is constant outside $U$ and $H_+$ differs from $F$ and $H_-$ only by the constant value of $G^0$ on $\bar V\setminus U$. Therefore we also have the direct sum decomposition from Proposition \ref{hom decomp} for $H_+$. It is compatible with the homomorphism induced by the homotopy $F^s$ and the monotone homotopy map for a homotopy from $H_+$ to $H_-$.

\subsection{The Floer homology of $H_{\pm}$ and the monotone homotopy map}
\label{sec:HF}

Consider the splitting of Floer chain groups into the direct sum
\begin{equation}
\CF_{*}^{(a,\,b)}(K)=\CF_{*}^{(a,\,b)}(K,U)\oplus \CF_{*}^{(a,\,b)}(K;M,U)
\label{chain sum}
\end{equation}
where the first summand is generated by the one-periodic orbits in $U$. The second summand is spanned by all the remaining orbits.

\begin{prop}
\label{hom decomp}
Let $K$ be a Hamiltonian on $M$ satisfying conditions (H1) and (H2) and let $U$ and $V$ be two open sets such that $U\subset V$. Assume that both $U$ and $V$ are bounded by level sets of $K$ and that $K$ is autonomous on $V\setminus U$ with not one-periodic orbits in this domain.
Then there exists an $\epsilon>0$, depending only on $J$, the open sets $U$ and $V$ and on $K|_{V\setminus U}$ such that \eqref{chain sum} gives rise to a direct sum decomposition of homology
\begin{equation}
\HF_{*}^{(a,\,b)}(K)=\HF_{*}^{(a,\,b)}(K,U)\oplus \HF_{*}^{(a,\,b)}(K;M,U)
\label{Homology sum}
\end{equation}
whenever the action interval $(a,\,b)$ is chosen such that $b-a<\epsilon$.
\end{prop}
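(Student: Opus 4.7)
The plan is to reduce the homological decomposition \eqref{Homology sum} to a chain-level statement: for $b-a$ sufficiently small, the Floer differential on $\CF^{(a,b)}_*(K)$ preserves the splitting \eqref{chain sum}, i.e.\ no rigid Floer trajectory connects a one-periodic orbit in $U$ to one outside $U$. The crux is thus a lower bound on the energy of any such ``crossing'' trajectory.

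First I would recall that if $u$ is a Floer trajectory asymptotic to orbits $x,y$ with actions in $(a,b)$, then
\[
E(u)=\mathcal{A}_K(x)-\mathcal{A}_K(y)<b-a,
\]
so it suffices to show there exists $\eps>0$ depending only on $J$, $U$, $V$, and $K|_{V\setminus U}$, such that every Floer cylinder $u$ connecting an orbit in $U$ to an orbit not contained in $\overline U$ satisfies $E(u)\ge\eps$. Once this is proved, choosing $b-a<\eps$ yields that the differential splits into a direct sum of two sub-complexes, generated respectively by orbits in $U$ and by the remaining ones, and passing to homology gives \eqref{Homology sum}.

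For the energy lower bound I would argue by contradiction. Suppose there is a sequence $u_n$ of Floer trajectories, each connecting an orbit $x_n\subset U$ to an orbit $y_n\subset M\setminus\overline U$, with $E(u_n)\to 0$. Since $\partial U$ and $\partial V$ are level sets of the autonomous Hamiltonian $K|_{V\setminus U}$ and $K$ has no one-periodic orbits on $V\setminus U$, each $u_n$ must cross the shell $\overline V\setminus U$. Using the $C^0$-bound for Floer trajectories established in \cite{AS} under (H1), (H2), together with standard Gromov compactness, one can extract, after suitable $s$-translations tracking the crossing, a subsequential limit which is a (possibly broken) Floer trajectory of total energy zero. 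Every component of such a limit is $s$-independent, hence a one-periodic orbit of $K$; and by the choice of the $s$-shifts, at least one such limit orbit meets $\overline V\setminus U$. Because $K$ is autonomous with no one-periodic orbits on $V\setminus U$, the only way this orbit can sit in $\overline V\setminus U$ is for it to coincide with a one-periodic orbit on $\partial U$ or $\partial V$; a standard boundary-barrier/integrated-maximum-principle argument, using that $X_K$ is tangent to these level sets, rules this out. The contradiction produces the desired $\eps>0$, and inspection of the argument shows $\eps$ depends only on the data named in the statement.

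The main obstacle is the compactness step: one must ensure that the sequence $u_n$ cannot escape to infinity in $M=T^\ast B$ and that the $s$-shifts can be chosen so that the zero-energy limit genuinely captures an orbit lying in $\overline V\setminus U$. The first point is handled by the $C^0$-estimates of \cite{AS}, which are exactly what (H1) and (H2) guarantee; the second is a routine but delicate application of the crossing condition combined with the fact that $K|_{V\setminus U}$ is autonomous, so that the no-escape principle applies to the characteristic foliations of $\partial U$ and $\partial V$. Once these ingredients are in place, the same reasoning automatically shows that the decomposition is natural with respect to continuation maps induced by monotone homotopies supported away from the shell, which is used later in Section \ref{sec:HF} to analyze $H_\pm$.
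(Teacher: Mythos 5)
Your high-level reduction is right: if no Floer trajectory of energy less than $\eps$ connects an orbit inside $U$ to one outside $\overline U$, then the chain-level splitting \eqref{chain sum} is preserved by the differential (since $E(u)=\mathcal{A}_K(x)-\mathcal{A}_K(y)<b-a$), and \eqref{Homology sum} follows. This is indeed what the result reduces to. However, the paper does not prove the energy bound by a soft Gromov-compactness argument; it cites \cite{He}, which establishes an explicit, quantitative energy lower bound following the method of \cite{Us} (an integrated Stokes-type estimate localized on the shell $V\setminus U$), in the spirit of the $J$-holomorphic-curve energy bound used for this purpose in \cite{GG-gaps}. So you are proposing a genuinely different route.

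There is, however, a real gap in your route, and it is exactly the clause you dismiss with ``inspection of the argument shows $\eps$ depends only on the data named in the statement.'' Your contradiction argument takes a sequence $u_n$ of Floer trajectories for $K$ with $E(u_n)\to 0$, applies the $C^0$-bounds of \cite{AS} and Gromov compactness, and extracts a zero-energy limit meeting $\overline V\setminus U$. Every ingredient here---the space of one-periodic orbits you are allowed as asymptotics, the $C^0$-estimates, the compactness on $T^\ast B$---depends on $K$ globally, not just on $K|_{V\setminus U}$, $J$, $U$, $V$. So the $\eps$ you produce is a constant for the fixed Hamiltonian $K$, and nothing in the argument shows it survives a modification of $K$ inside $U$ or outside $V$. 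But that dependence clause is not cosmetic: it is used essentially in Section \ref{sec:HF}, where $\eps$ is chosen before $H_-$ is fully determined and is then applied to iterated Hamiltonians whose behavior inside $U$ and outside $V$ differs from the Hamiltonian used to pick $\eps$. A direct Usher-style estimate avoids this because the energy bound is computed from the portion of the trajectory inside the shell alone. To salvage the compactness approach, you would have to localize it to the shell---extracting limits of the restrictions of $u_n$ to $u_n^{-1}(\overline V\setminus U)$, with varying domains and boundary---which is considerably more delicate than the global compactness you invoke.

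A minor additional point: the ``boundary-barrier/maximum-principle'' step is not needed and is not quite the right tool. Since $U$ is open, $\partial U\subset V\setminus U$, so a zero-energy limit with $z(0)\in\partial U$ is a one-periodic orbit of the autonomous $K|_{V\setminus U}$ contained in the level set $\partial U\subset V\setminus U$, contradicting the hypothesis directly; no maximum principle is required. (Orbits on $\partial V$ are not excluded by the hypotheses, which is why one should track the crossing of $\partial U$ rather than $\partial V$.)
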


This proposition is proven in \cite{He}, using an energy bound along the lines of \cite{Us}, in the situation of an arbitrary symplectic manifold.
In the case of a symplectically rational manifold, this direct sum decomposition was proven in \cite{GG-gaps} if $K$ is constant on $V\setminus U$. This proof relies on energy bounds for $J$-holomorphic curves.
Here we are going to apply Proposition~\ref{hom decomp} to the functions $H_\pm$ and $F$. As the cotangent bundle is symplectically aspherical, we could also have taken those functions to be constant on $V\setminus U$ and used the direct sum decomposition from \cite{GG-gaps}.

\begin{rmk}
\label{rmk:locFloer}
This direct sum decomposition generalizes the concept of the local Floer homology. Indeed, if $x$ is an isolated periodic orbit with action $c$, one can find neighborhoods $U$ and $V$ of $x$ containing no other periodic orbits. Then for a sufficiently small $\eps>0$, the summand $\HF_{*}^{(c-\eps,\,c+\eps)}(K,U)$ is exactly the local Floer homology $\HF_\ast(K,x)$ of the Hamiltonian $K$ at $x$. Indeed, for sufficiently small non-degenerate perturbations $\tilde{K}$ of $K$, the periodic orbits of $\tilde{K}$ near $x$ have action in the interval $(c-\eps,\,c+\eps)$ and therefore are in this summand of the Floer homology.
\end{rmk}

In the construction of $H_-$, the constants are chosen such that this proposition can be applied.
To be more precise, we first choose some small constant $\alpha_0>0$ such that $\alpha_0/\pi$ is irrational. Then we fix the Hamiltonian $H_-$ on $B_r$ and pick $\eps>0$ smaller than the energy bound from Proposition~\ref{hom decomp} for a Hamiltonian linear with slope $\alpha_0$ on $V\setminus U$.
Using these choices, we take a sufficiently large order of iteration $k$ as in \cite{Gi,GG-gaps}. 
Furthermore, we now fix $H_-$ outside $B_r$ with slope $\alpha=\alpha_0/k$ on $V\setminus U$.
We thus have the direct sum decomposition of filtered Floer homology by Proposition~\ref{hom decomp} for $H_-^{(k)}$. At this point we choose some $\delta_k\in(0,\,\eps/2)$, depending on $k$, to ensure that the action intervals $(kc+\delta_k,\,kc+\eps)$ and $(kc-\delta_k,\,kc+\delta_k)$ are sufficiently small for the direct sum decomposition \eqref{Homology sum}.

Thus the proposition applies to the functions $H_\pm$ and $F$ and we can restrict ourselves to the summand of the Floer homology containing the orbits in $U$. Since $U$ is contained in a Darboux neighborhood of $x_0$, this summand does only depend on the restriction of the function to $U$ and is independent of the ambient manifold.

For the calculation of this part of the Floer homology groups, we refer the reader to \cite{Gi,GG-loc}.
The functions $H_-$ and $F$ are autonomous and generate the inverses of the flows of the Hamiltonians $H_+$ and $F$ used in \cite{Gi,He}, hence they are just the negatives of those functions. The Floer equation \eqref{Floer} is the same equation as for the functions in \cite{Gi} with the orientation in both coordinates of $S^1\times\R$ reversed and the negative Hamiltonians. Thus the numbers of connecting Floer trajectories are equal.

Similarly, the homotopy from $F$ to $H_-$ is the negative of the homotopy  from $H_+$ to $F$ in \cite{Gi} and we obtain an isomorphism of Floer homology goups
\[
\mathbbm{Z}_2\cong \text{HF}_{-n-1}^{(kc-\eps,\,kc-\delta_k)}(F^{(k)},U)\to \text{HF}_{-n-1}^{(kc-\eps,\,kc+\delta_k)}(H_-^{(k)},U)\cong\mathbbm{Z}_2
\]
by the same argument.

Then we have the commutative diagram
\begin{displaymath}
\xymatrix{
\HF_{-n-1}^{(kc-\eps,\,kc-\delta_k)}(F^{(k)},U)\ar[r]^{\cong}\ar[rd]_{\cong}&\HF_{-n-1}^{(kc-\eps,\,kc-\delta_k)}(H_+^{(k)},U)\ar[d]^{\Psi}\\
&\HF_{-n-1}^{(kc-\eps,\,kc-\delta_k)}(H_-^{(k)},U)
}
\end{displaymath}
where  the horizontal map is induced by the isopsectral homotopy $F^s$ and the other maps are monotone homotopy maps. As in the case of a closed symplectically aspherical manifold, the isospectral homotopy $F^s$ induces an isomorphism in this summand of the filtered Floer homology.
The commutativity is established in the same way as in the case of a closed symplectically aspherical manifold. 
The diagonal map is an isomorphism by the same argument as in \cite{Gi} using the long exact sequence \eqref{eq:longexact} of filtered Floer homology to go over to the action interval $(kc-\delta_k,kc+\delta_k)$.
By the commutativity of this diagram, the map $\Psi$ is also an isomorphism. Thus the monotone homotopy map
\[
\HF_{-n-1}^{(kc-\eps,\,kc-\delta_k)}(H_+^{(k)})\to \HF_{-n-1}^{(kc-\eps,\,kc-\delta_k)}(H_-^{(k)})
\]
 is non-zero and this map factors through the Floer homology group of $H$, which we want to show to be non-trivial. This proves Theorem~\ref{theorem}.


\begin{thebibliography}{XXXX}

\bibitem[AS]{AS} A. Abbondandolo, M. Schwarz,
On the Floer homology of cotangent bundles,
\emph{Comm. Pure Appl. Math.}, \textbf{59} (2006), 254-316.

\bibitem[BPS]{BPS}P. Biran, L. Polterovich, D. Salamon,
Propagation in Hamiltonian dynamics and relative symplectic homology,
\emph{Duke Math. J.}, \textbf{119} (2003), 65-118.

\bibitem[Co]{Co}C.C. Conley, 
Lecture at the University of Wisconsin, April 6, 1984.

\bibitem[CZ]{CZ}C.C. Conley, E. Zehnder, 
Morse-type index theory for flows and periodic solutions for 
Hamiltonian equations, 
\emph{Comm. Pure Appl. Math.} \textbf{37} (1984), 207-253. 
 
\bibitem[FH]{FrHa}J. Franks, M. Handel, 
Periodic points of Hamiltonian surface diffeomorphisms, 
\emph{Geom. Topol.}, \textbf{7} (2003), 713-756.

\bibitem[Gi1]{Gi-magn}V.L. Ginzburg,
On closed trajectories of a charge in a magnetic field. An application of symplectic geometry, 
in \emph{Contact and Symplectic Geometry}, Eds.: C.B. Thomas, INI Publications, Cambridge University Press, Cambridge (1996), 131-148.

\bibitem[Gi2]{Gi}V.L. Ginzburg, 
The Conley Conjecture, 
\emph{Ann. of Math.}, \textbf{172} (2010), 1127-1180.

\bibitem[GG1]{GG-Rel}V.L. Ginzburg, B. G\"urel, 
Relative Hofer-Zehnder capacity and periodic orbits in twisted cotangent bundles, 
\emph{Duke math. J.}, \textbf{123} (2004), 1-47.

\bibitem[GG2]{GG-gaps}V.L. Ginzburg, B.Z. G\"urel, 
Action and index spectra and periodic orbits in Hamiltonian dynamics, 
\emph{Geom. Topol.}, \textbf{13} (2009), 2745-2805.

\bibitem[GG3]{GG-loc}V.L. Ginzburg, B.Z. G\"urel, 
Local Floer homology and the action gap, 
\emph{J. Sympl. Geom.}, \textbf{8} (2010), 323-357.

\bibitem[He]{He} D. Hein,
The Conley conjecture for irrational symplectic manifolds,
Preprint 2009, arXiv:0912.2064, to appear in \emph{J. Sympl. Geom.}

\bibitem[Hi]{Hi}N. Hingston, 
Subharmonic solutions of Hamiltonian equations on tori, 
\emph{Ann. of Math.}, \textbf{170} (2009), 529-560. 

\bibitem[Lo1]{Lo-CC} Y. Long,
Multiple periodic points of the Poincar\'e map of Lagrangian systems on tori,
\emph{Math. Z.}, \textbf{233} (2000), 443-470.

\bibitem[Lo2]{Lo-ind} Y. Long,
Index Theory for Symplectic Path with Applications,
\emph{Progress in Math.}, \textbf{207} (2002). Birkh\"auser Verlag, Basel.

\bibitem[Lu]{Lu} G. Lu,
The Conley conjecture for Hamiltonian systems on the cotangent bundle and its analogue for Lagrangian systems,
\emph{J. Funct. Anal.}, \textbf{256} (2009), 2967-3034.

\bibitem[Ma]{Ma} M. Mazzucchelli,
The Lagrangian Conley Conjecture,
Preprint 2008, arXiv:0810.2108, to appear in \emph{Comment. Math. Helv.}

\bibitem[Sa]{Sa}D.A. Salamon, 
Lectures on Floer homology, 
in \emph{Symplectic Geometry and Topology}, Eds.: Y. Eliashberg and L. Traynor, 
IAS/Park City Mathematics series, \textbf{7} (1999), pp. 143-230.

\bibitem[SW]{SW}D. Salamon, J. Weber,
Floer homology and the heat flow,
\emph{Geom. Funct. anal.} \textbf{16} (2006), 1050-1138.


\bibitem[SZ]{SZ}D. Salamon, E. Zehnder, 
Morse theory for periodic solutions of Hamiltonian systems and the Maslov index, 
\emph{Comm. Pure Appl. Math.}, \textbf{45} (1992), 1303-1360.

\bibitem[Se]{Se} P. Seidel,
A biased view of symplectic cohomology,
Lecture notes from \emph{Current Developments in Mathematics}, 2006,
211--253, Int. Press, Somerville, MA, 2008.

\bibitem[Us]{Us}M. Usher, 
Floer homology in disk bundles and symplectically twisted geodesic flows, 
\emph{J.~Mod. Dyn.}, \textbf{3} (2009), 61-101.

\bibitem[Vi]{Vi}C. Viterbo,
Functors and Computations in Floer homology and applications, Part II,
Preprint 1996 (revised 2003), available from 
http://www.math.polytechnique.fr/cmat/viterbo/Prepublications.html. 

\end{thebibliography}
\end{document}